\newcommand{\MathVector}[1]{\boldsymbol{#1}}
\newcommand{\intersect}[1]{\bigl< #1 \bigr>}
\newcommand{\FiniteCount}[1]{\#\left| #1 \right|}
\newcommand{\norm}[1]{|| #1 ||}
\newcommand{\LieAlg}[1]{\mathfrak{#1}}
\newcommand{\Module}[1]{\mathfrak{#1}}
\newcommand{\Moduli}{\mathcal{M}}
\newcommand{\CompactModuli}{\overline{\mathcal{M}}}
\newcommand{\Teichmuller}{\mathcal{T}}
\newcommand{\CompactTeich}{\overline{\mathcal{T}}}
\newcommand{\Integers}{\mathbb{Z}}
\newcommand{\Reals}{\mathbb{R}}
\newcommand{\Complex}{\mathbb{C}}
\newcommand{\AltOmega}{\hat{\Omega}}
\DeclareMathOperator{\MCG}{Mod}
\DeclareMathOperator{\Stab}{Stab}
\DeclareMathOperator{\Aut}{Aut}
\newtheorem{theorem}{Theorem}[section]
\newtheorem{proposition}[theorem]{Proposition}
\newtheorem{corollary}[theorem]{Corollary}
\newtheorem{lemma}[theorem]{Lemma}
\numberwithin{equation}{section}
\begin{document}

\author{Brad Safnuk}
\address{McMaster University, Hamilton, Canada}
\email{bsafnuk@math.mcmaster.ca}
\title[Integration on $\CompactModuli_{g,n}$ through localization]{Integration on moduli spaces of stable curves through localization}
\begin{abstract}
We introduce a new method of calculating intersections on $\CompactModuli_{g,n}$, using localization of equivariant cohomology. As an application, we give a proof of Mirzakhani's recursion relation for calculating intersections of mixed $\psi$ and $\kappa_1$ classes.
\end{abstract}

\date{\today}
\maketitle

\section{Introduction}

Many integration formulas that exist for the moduli space of stable curves seem to obey a type of localization principle: that is, an integral over $\CompactModuli_{g,n}$ can often be written in terms of integrals over lower dimensional moduli spaces, which naturally embed in $\CompactModuli_{g,n}$. The primary example is the Virasoro relations coming from the Witten-Kontsevich Theorem, where integrals of tautological classes can be written as integrals over moduli spaces obtained by pinching off 3--punctured spheres from the base surface. This geometrically appealing fact was even noticed by Witten in his groundbreaking work on the subject \cite{art:Witten2dGravity}. However, it has not been possible to prove any such statement using Atiyah--Bott localization directly. In fact, the moduli space of stable curves has very little known symmetry, so there are no contenders for a group action which would lead to localization. 

We present a construction which allows the calculation of arbitrary integrals on $\CompactModuli_{g,n} $using localization techniques. The group action is not on $\CompactModuli_{g,n}$, but rather on certain infinite covers $\CompactModuli^{\Gamma}_{g,n}$ first considered by Mirzakhani~\cite{art:MirzakhaniWP} for calculating the Weil-Petersson volume of $\Moduli_{g,n}$. 

The technique presented in this paper is very much in the spirit of (and heavily influenced by) Mirzakhani's approach. However, it has the advantage of not being reliant on properties of the Weil--Petersson form - in principal it works for any integral on $\CompactModuli_{g,n}$.  Moreover, the results do not directly involve the moduli spaces of bordered Riemann surfaces, which played an essential role in Mirzakhani's work. This makes it plausible to apply this localization theory in a broader context. For example, it would be interesting to extend these ideas to the moduli space of stable maps appearing in Gromov--Witten theory or the moduli space of $r$-spin structures. Each of these spaces has conjectural intersection formulas (the Virasoro conjecture in the case of the former and the generalized Witten conjecture for the latter) which are remarkably similar in form to the original Witten-Kontsevich theorem. This suggests that a proof should come from the geometry of $\CompactModuli_{g,n}$.

In Section~\ref{sect:Local} we construct the necessary generalization of the Atiyah--Bott localization theorem to apply it in the present context. The first (minor) complication is that the spaces involved are not compact. However, they behave well enough that extending localization is not a problem. More serious, is that the equivariant forms which naturally arise when calculating intersections of tautological classes are not in the usual ring of the Cartan model  $\Omega(M) \otimes S(\LieAlg{g}^*)$. Instead, they live in $\AltOmega(M)\otimes \Module{m}$, where $\Module{m}$ is a $S(\LieAlg{g}^*)$--module of formal power series in $\LieAlg{g}^*$ and their inverses,  and $\AltOmega(M)$ is the space of  measurable forms smooth on some open dense subset of $M$.  This module over the Cartan ring retains enough features of smooth equivariant forms to admit a type of localization theorem.

Section~\ref{sect:Integrate} outlines the general localization method for $\CompactModuli_{g,n}$. Following Mirzakhani, we use McShane's identity to lift integrals over $\CompactModuli_{g,n}$ to infinite covers with torus symmetry. The fixed point sets are themselves moduli spaces that reside naturally in the boundary of $\CompactModuli_{g,n}$. 

Finally, in Section~\ref{sect:Equiv} we apply the localization construction to calculate intersection numbers of mixed $\psi$ and $\kappa_1$ classes. The result is a recursive formula which is equivalent to the original recursion relation discovered by Mirzakhani \cite{art:MirzakhaniWP}. 

\section{Equivariant cohomology with module coefficients}
\label{sect:Local}

Let $G = T^r$ be an $r$--dimensional torus with Lie algebra $\LieAlg{g}$, and $\Module{m}$ be a graded $S(\LieAlg{g}^*)$ module, where $S(\LieAlg{g}^*)$ denotes the algebra of polynomial functions $\LieAlg{g} \rightarrow \Complex$.
Suppose $M$ is a $2d$--dimensional $G$--manifold. Define the complex $(\Omega_G(M;
\Module{m}), d_G)$ by
\begin{equation*}
  \Omega_G(M; \Module{m}) = \Omega(M)^{G} \otimes \Module{m}
\end{equation*}
with differential
\begin{equation*}
  d_G \omega \otimes s = d\omega \otimes s - \sum_{i=1}^{r}
  \iota_{X_i}\omega\otimes \xi_i \cdot s,
\end{equation*}
where $\{X_i\}$ is any basis for $\LieAlg{g}$ and $\{\xi_i\}$ is the
dual basis of $\LieAlg{g}^*$. By a slight abuse of notation, we denote the induced vector fields on $M$ by $X_i$.

We will also make use of the more general complex $\AltOmega_G(M; \Module{m})$, whereby
$\Omega(M)$ is replaced with $\AltOmega(M)$, the space of differential
forms represented locally by measurable functions, smooth
almost everywhere.  In other words, an element of $\AltOmega(M)$ is a smooth form in $\Omega(U)$ where $U \subset M$ is an open dense subset. Note that we do not assume that differential forms in $\AltOmega(M)$ are integrable.

The above choice of basis for $\LieAlg{g}^*$ gives an isomorphism
$S(\LieAlg{g}^*) \cong \Complex[\xi_1, \ldots, \xi_r]$.  More generally, let $R$ be the ring $\Complex[\xi_1, \ldots, \xi_r]_{(\xi_1, \ldots, \xi_r)}$, whose elements consist of finite sums of monomials in $\xi_i$ and $\xi_i^{-1}$.

For our purposes, it suffices to consider the $R$--module $\Module{m}$
consisting of elements of $\Complex[[\xi_1, \xi^{-1}_1, \ldots, \xi_r,
\xi^{-1}_r]]$ with bounded degree (elements $\xi_i$ have
degree $2$, $\xi^{-1}_i$ degree $-2$). This module is equipped with a
$\Integers^r$ grading which is respected by multiplication by elements
of $R$. In the sequel, any reference to an equivariant form means an element of $\AltOmega_G(M; \Module{m})$, unless otherwise specified.  For any equivariant form $\omega$, we let $\omega_{[k]}$ be the component consisting of forms of (differential) degree $k$. We also define the obvious projection operators $p_{\alpha}:
\Module{m} \rightarrow \Complex$ for any $\alpha \in \Integers^r$, which extend to projections $p_{\alpha}: \AltOmega_G(M;\Module{m}) \rightarrow \AltOmega(M)$.
We note for use in the sequel the following useful statement.
\begin{proposition}
If $\phi:X\rightarrow M$ is a smooth equivariant map and $X$ is compact then $p_{\alpha}\phi^* = \phi^*p_{\alpha}$ and $p_{\alpha}\phi_* = \phi_* p_{\alpha}$ for all smooth, compactly supported equivariant forms.
\end{proposition}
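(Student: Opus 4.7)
The plan is to exploit the factorization $\AltOmega_G(M;\Module{m}) = \AltOmega(M)^G \otimes \Module{m}$: the projection $p_{\alpha}$ lives on the $\Module{m}$ factor, while the geometric operations $\phi^*$ and $\phi_*$ live on the $\AltOmega(M)^G$ factor, so they should commute for formal reasons. The only subtlety is that $\Module{m}$ is not a direct sum but a space of formal Laurent series of bounded total degree, so one has to check that $\phi^*$ and $\phi_*$ extend to such formal sums in a way that respects the $\Integers^r$ grading.

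First I would use the $\Integers^r$ grading to decompose any smooth, compactly supported equivariant form $\omega$ as
\begin{equation*}
  \omega = \sum_{\beta \in \Integers^r} \omega_{\beta} \otimes \xi^{\beta},
\end{equation*}
where $\xi^{\beta} = \xi_1^{\beta_1} \cdots \xi_r^{\beta_r}$, $\omega_{\beta} \in \Omega(X)^G$ (or $\Omega(M)^G$), and the sum has bounded total degree. By definition of the projection, $p_{\alpha}\omega = \omega_{\alpha}$. The sum is formal on the $\Module{m}$ side but, for any fixed differential degree, has finitely many nonzero contributions of that degree in $\omega_{\beta}$.

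For the pullback case I would note that $\phi^*$ is a $\Complex$-linear operator acting only on the differential form factor, and that equivariance of $\phi$ ensures $\phi^*\omega_{\beta}$ remains $G$-invariant. Since pullback is defined degree-by-degree in $\Module{m}$ simply by $\phi^*(\eta \otimes \xi^{\beta}) = \phi^*\eta \otimes \xi^{\beta}$, I get $\phi^*\omega = \sum_{\beta} \phi^*\omega_{\beta} \otimes \xi^{\beta}$, and extracting the $\alpha$-component from either side yields $\phi^* \omega_{\alpha}$. The pushforward case is parallel: because $X$ is compact and $\omega$ is smooth and compactly supported, fiber integration (or Poincaré duality, however $\phi_*$ is taken in the ambient framework of the paper) is a well-defined $\Complex$-linear operation on the form factor that leaves elements of $\Module{m}$ untouched, so $\phi_*(\eta \otimes \xi^{\beta}) = \phi_*\eta \otimes \xi^{\beta}$ and the same term-by-term argument applies.

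The main obstacle, such as it is, is the infinite-sum issue: one has to justify that $\phi^*$ and $\phi_*$ commute with the formal summation built into $\Module{m}$. This is handled by the bounded-degree condition: any fixed total degree involves only finitely many $\xi^{\beta}$, and the projection $p_{\alpha}$ only ever extracts one graded piece, so no genuine analytic convergence issue arises — each equality reduces to a statement about a single graded component $\omega_{\alpha}$, where the commutation is manifest.
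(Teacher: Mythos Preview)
The paper states this proposition without proof, treating it as an immediate consequence of the tensor-product structure. Your argument is correct and supplies exactly the reasoning the paper omits: $p_{\alpha}$ acts only on the $\Module{m}$ factor while $\phi^{*}$ and $\phi_{*}$ act only on the differential-form factor, so they commute.

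One small remark: your claim that ``any fixed total degree involves only finitely many $\xi^{\beta}$'' is not true when $r\geq 2$ (for instance $\xi_{1}^{k}\xi_{2}^{-k}$ all have degree $0$), but this does not matter for the argument. Elements of the algebraic tensor product $\AltOmega(M)^{G}\otimes\Module{m}$ are \emph{finite} sums of simple tensors $\eta\otimes m$; the infinite Laurent sum lives entirely inside $m\in\Module{m}$, and neither $\phi^{*}$ nor $\phi_{*}$ touches that factor. So the infinite-sum issue you flag is a red herring, and the commutation is immediate on simple tensors and extends by linearity.
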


Our aim is to establish a localization theorem in the following
situation. Let $M$ be a compact $G$ manifold, equipped with a
$G$-invariant metric $(\ ,\ )$ for which the vector fields $X_1,
\ldots, X_r$ of the torus action are orthogonal. Furthermore, we assume that if $X$
is a connected component of the fixed point set $M^G$ and $N_X$ is the
corresponding equivariant normal bundle for the embedding of $X$ in
$M$ then there is an equivariant Euler form $e(N_X)$ which splits with respect to
$\{\xi_i \}$. In other words $e(N_X) = e_1 \cdots e_r$ where $e_i$ is
an element of $\Omega(M)^G \otimes \Complex[\xi_i]$.

\begin{theorem}
\label{thm:Localization}
Let $\omega \in \AltOmega_G(M; \Module{m})$ be $d_G$ closed
and homogeneous of degree $2d = \dim M$. If for a fixed
$\alpha\in\Integers^r$ $p_{\alpha + \beta}\omega$ is smooth for all
$\beta \in \Integers^r_{\geq 0}$ (i.e. an element of $\Omega(M)$) then
\begin{equation*}
  \int_{M} p_{\alpha} \omega = \sum_{X} \int_X p_{\alpha} \frac{i^*\omega}{e(N_X)},
\end{equation*}
where the sum is over all connected components of $M^G$ and $e(N_X)$
is a split equivariant Euler form of the normal bundle for the
inclusion $i:X \rightarrow M$.
\end{theorem}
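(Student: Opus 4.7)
The plan is to mimic the Atiyah--Bott--Berline--Vergne proof, adapted to the Laurent-coefficient module $\Module{m}$ and the merely almost-everywhere smoothness of $\omega$. I will construct an equivariant form $\lambda$ on $M\setminus M^G$ with $d_G\lambda=1$ so that $\omega=d_G(\omega\lambda)$ on that open set, apply Stokes on $M$ minus a shrinking tubular neighborhood $U_\epsilon$ of $M^G$, and then pass to $\epsilon\to 0$.

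For the primitive, put $\theta_i(\cdot)=(X_i,\cdot)$, which is $G$-invariant since $[X_i,X_j]=0$ and the metric is $G$-invariant. Orthogonality of the $X_i$ gives $\iota_{X_j}\theta_i=\delta_{ij}\|X_i\|^2$, and a direct computation yields
\begin{equation*}
d_G\hat\theta = \sum_i\frac{d\theta_i}{\xi_i}-\sum_i\|X_i\|^2,\qquad \hat\theta:=\sum_i\frac{\theta_i}{\xi_i}.
\end{equation*}
The zero-form part $-\sum_i\|X_i\|^2$ is nowhere vanishing on $M\setminus M^G$, so $d_G\hat\theta$ admits a formal geometric-series inverse in $\AltOmega_G(M\setminus M^G;\Module{m})$; only finitely many terms contribute to each form degree, so the inverse is well-defined. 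Since $\hat\theta$ is $G$-invariant, $d_G\hat\theta$ is $d_G$-closed, hence so is $(d_G\hat\theta)^{-1}$, and the Leibniz rule then gives $d_G\lambda=1$ for $\lambda:=\hat\theta\cdot(d_G\hat\theta)^{-1}$. Because $\omega$ is $d_G$-closed of even total degree, $\omega=d_G(\omega\lambda)$ on $M\setminus M^G$.

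Expanding $\lambda=\sum_{\gamma\geq 0}\nu_\gamma\,\xi^{-\gamma}$ with $\nu_\gamma$ smooth on $M\setminus M^G$, one finds $p_\alpha(\omega\lambda)=\sum_{\gamma\geq 0}(p_{\alpha+\gamma}\omega)\,\nu_\gamma$, a finite sum whose summands are smooth on $M\setminus M^G$ by hypothesis. In the only nontrivial case $|\alpha|=0$, the contraction terms $p_{\alpha-e_i}(\iota_{X_i}(\omega\lambda))$ appearing in $p_\alpha(d_G(\omega\lambda))$ vanish identically (they would be forms of degree $2d+1$ on a $2d$-manifold), so $p_\alpha(d_G(\omega\lambda))=d\,p_\alpha(\omega\lambda)$, and Stokes on the compact $2d$-manifold with boundary $M\setminus U_\epsilon$ yields
\begin{equation*}
\int_{M\setminus U_\epsilon}p_\alpha\omega = -\int_{\partial U_\epsilon}p_\alpha(\omega\lambda).
\end{equation*}
The left side converges to $\int_M p_\alpha\omega$ by dominated convergence. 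On the right side, the splitting $e(N_X)=e_1\cdots e_r$ decomposes $N_X$ into $G$-invariant rank-two pieces each controlled by a single $\xi_i$, so the fiber integral over the $\epsilon$-sphere bundle of $N_X$ reduces to $r$ commuting copies of the one-variable residue calculation of the standard $S^1$ Atiyah--Bott argument; using the earlier proposition to commute $p_\alpha$ past the fiber pushforward, the limit produces $\sum_X\int_X p_\alpha(i^*\omega/e(N_X))$.

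The principal technical obstacle is this last step: rigorously matching the globally-constructed $\lambda$ with a locally-built equivariant angular form near each fixed component $X$, so that integration along the $\epsilon$-sphere bundle of $N_X$ reproduces $1/e(N_X)$ term by term. The splitting hypothesis $e(N_X)=e_1\cdots e_r$ is indispensable because it allows the residue in $r$ variables to decouple into $r$ independent one-variable residues, and the Laurent module $\Module{m}$ is exactly what is needed to house the inverse-$\xi_i$ factors that emerge. The remaining bookkeeping -- finiteness of the $\gamma$-sum, smoothness of $p_\alpha(\omega\lambda)$ on $M\setminus M^G$, and interchange of integral and limit -- all follow from the smoothness hypothesis on $p_{\alpha+\beta}\omega$ combined with elementary form-degree bounds.
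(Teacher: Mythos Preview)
Your primitive $\lambda=\hat\theta\,(d_G\hat\theta)^{-1}$ is the paper's form $\nu$, and the fact that $p_\alpha\omega_{[2d]}$ is exact on $M\setminus M^G$ is precisely the content of the paper's second lemma. The divergence is in how the primitive is deployed. You follow the shrinking-tube variant of Atiyah--Bott: Stokes on $M\setminus U_\epsilon$ and then a limit of the boundary integral over $\partial U_\epsilon$. You correctly identify this limit as the crux and then do not carry it out, describing it only as ``$r$ commuting copies of the one-variable residue calculation'' and conceding in your final paragraph that matching the global $\lambda$ to a local angular form near each $X$ is ``the principal technical obstacle.'' That step carries all the content, so the proposal is incomplete as written. (Incidentally, the vanishing of the contraction terms in $p_\alpha d_G(\omega\lambda)$ at top form degree is a pure form-degree statement and holds for every $\alpha$, not only $|\alpha|=0$.)

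The paper sidesteps the boundary limit altogether. It uses $\nu$ only to establish exactness away from $M^G$, then chooses smooth $G$-invariant cutoffs $\rho_X$ and writes $\omega=d_G\tilde\omega+\sum_X\omega_X$ with $\tilde\omega=(1-\sum_X\rho_X)\,\nu\wedge\omega$ supported on $M\setminus M^G$ (so $\int_M p_\alpha d_G\tilde\omega=0$) and each $\omega_X$ smooth, compactly supported in a contractible tube over $X$, and equal to $\omega$ near $X$. The local integral $\int p_\alpha\omega_X$ is then computed by a Thom-form argument rather than a residue: since $\pi^*e(N_X)$ is a Thom form with $\pi_*\pi^*e(N_X)=1$, one expands $\int p_\alpha\omega_X=\sum_{\beta+\gamma=\alpha}\int p_\beta\bigl(\omega_X/\pi^*e(N_X)\bigr)\,p_\gamma\bigl(\pi^*e(N_X)\bigr)$; the splitting hypothesis forces $0\le\gamma_i\le1$, so every $p_\beta(\omega_X/\pi^*e(N_X))$ appearing is smooth by the hypothesis on $p_{\alpha+\beta}\omega$, and contractibility plus the pushforward identity reduce the expression to $\int_X p_\alpha\bigl(i^*\omega/e(N_X)\bigr)$. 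This replaces your unproven $\epsilon\to0$ limit with a finite algebraic pushforward, and is where the splitting and smoothness hypotheses are actually consumed.
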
 

We note that the splitting hypothesis on the Euler forms is necessary
so that $\frac{i^*\omega}{e(N_X)}$ is an element of
$\AltOmega_G(X; \Module{m})= \AltOmega(X) \otimes \Module{m}$.

The proof of Theorem~\ref{thm:Localization} reduces to several lemmas.

\begin{lemma}
Under the same hypothesis as above, with additionally $\omega$ supported on an invariant contractible neighbourhood $U$ of $X \subset M^G$, and $p_{\alpha + \beta}\omega$ smooth and compactly supported for all $\beta \geq 0$, then
\begin{equation*}
  \int_U p_{\alpha}\omega = \int_X p_{\alpha} \frac{i^*\omega}{e(N_X)}
\end{equation*}
\end{lemma}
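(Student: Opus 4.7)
The plan is to apply the equivariant Thom isomorphism to the normal bundle $N_X$. Since both sides of the claimed identity vanish trivially unless $p_\alpha\omega$ is of top differential degree on $U$, we may assume $|\alpha|=0$. Using the exponential map of a $G$--invariant metric I would identify $U$ equivariantly with an open neighbourhood of the zero section of $N_X$, writing $\pi:N_X\to X$ for the projection and $s$ for the zero section, so that $i$ becomes $s$. Because every smooth projection $p_{\alpha+\beta}\omega$ is compactly supported in $U$, extending by zero we view $\omega$ as an equivariant form on all of $N_X$ with compact vertical support.

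Next I would construct an explicit equivariant Thom form. The splitting $e(N_X)=e_1\cdots e_r$ reflects a $G$--equivariant decomposition $N_X=\bigoplus_{i=1}^r N_i$ into torus weight subbundles; take $\tau=\tau_1\wedge\cdots\wedge\tau_r\in\Omega_G(N_X)$ with each $\tau_i$ a compactly supported Mathai--Quillen representative of the equivariant Thom class of $N_i$. Then $\tau$ is $d_G$--closed with compact fibrewise support, has fibre integral $\pi_*\tau=1$, and satisfies $s^*\tau=e(N_X)$. Setting $\mu := i^*\omega/e(N_X)\in\AltOmega_G(X;\Module{m})$, where the inverse is obtained by expanding each $1/e_i$ as a geometric series around its leading $\xi_i$ monomial (only finitely many terms survive in any $p_{\alpha+\beta}\mu$), one verifies $s^*(\pi^*\mu\wedge\tau)=\mu\cdot e(N_X)=i^*\omega$. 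A fibrewise radial equivariant homotopy operator applied degree by degree to the smooth compactly supported projections $p_{\alpha+\beta}(\omega-\pi^*\mu\wedge\tau)$ then yields a primitive $\eta\in\AltOmega_G(N_X;\Module{m})$ with $\omega-\pi^*\mu\wedge\tau=d_G\eta$ and every $p_{\alpha+\beta}\eta$ smooth and compactly supported.

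It suffices to verify $\int_U p_\alpha(d_G\eta)=0$. Since $p_\gamma$ commutes with $\iota_{X_i}$,
\begin{equation*}
  p_\alpha(d_G\eta) = d(p_\alpha\eta) - \sum_i\iota_{X_i}(p_{\alpha-\delta_i}\eta),
\end{equation*}
where $\delta_i$ is the $i$--th standard multi-index. The first term integrates to zero by Stokes, while each $p_{\alpha-\delta_i}\eta$ would have to be a form of degree $2d+1$ on the $2d$--manifold $N_X$ (since $\eta$ has equivariant degree $2d-1$ and $|\alpha-\delta_i|=-1$), and so vanishes identically. Together with the Proposition on commutation of projections with pushforward and the Thom property $\pi_*\tau=1$, this gives
\begin{equation*}
  \int_U p_\alpha\omega = \int_{N_X} p_\alpha(\pi^*\mu\wedge\tau) = \int_X p_\alpha\mu = \int_X p_\alpha\frac{i^*\omega}{e(N_X)}.
\end{equation*}

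The main technical obstacle is assembling the primitive $\eta$ within $\AltOmega_G(N_X;\Module{m})$. Because $1/e(N_X)$ lives in the enlarged module $\Module{m}$, the difference $\omega-\pi^*\mu\wedge\tau$ has nonzero components at arbitrarily high $\xi^\beta$--degrees, and one must stitch together the classical smooth equivariant homotopies applied componentwise into a single equivariant form in $\AltOmega_G$. The smoothness hypothesis on every $p_{\alpha+\beta}\omega$ is precisely what makes this componentwise construction well-defined.
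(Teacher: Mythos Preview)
Your overall strategy---use an equivariant Thom form $\tau$, compare $\omega$ with $\pi^*\mu\wedge\tau$, and push forward---is close to the paper's, but the step where you produce the primitive $\eta$ has a real gap. The fibrewise radial homotopy $H_t(v)=tv$ and its associated operator $K=\int_0^1\iota_{\partial_t}H_t^*\,dt$ do give $d_GK\sigma=\sigma$ for $\sigma=\omega-\pi^*\mu\wedge\tau$ (since $s^*\sigma=0$), but $K$ does \emph{not} preserve compact support: if $\sigma$ is supported in $\{|v|\le R\}$ then $H_t^*\sigma$ is supported in $\{|v|\le R/t\}$, so $K\sigma$ can be nonzero for arbitrarily large $|v|$. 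A rough estimate gives only $|K\sigma(v)|=O(1/|v|)$, which is not enough decay for the Stokes argument on $N_X$ when the fibre rank exceeds one. Your final paragraph flags the assembly of $\eta$ as the obstacle, but locates the difficulty in the module coefficients rather than in the loss of compact support; the latter is what actually breaks the argument.

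The paper circumvents this by reversing the order of operations: instead of subtracting $\pi^*\mu\wedge\tau$ and seeking a compactly supported primitive for the difference, it divides first, writing $\omega=\bigl(\omega/\pi^*e(N_X)\bigr)\cdot\pi^*e(N_X)$ with the second factor playing the role of a Thom form. One then applies contractibility only to the smooth projections $p_\beta\bigl(\omega/\pi^*e(N_X)\bigr)$, obtaining primitives that need \emph{not} be compactly supported; compact support is restored upon wedging with the Thom form, which carries compact vertical support, and the push-forward formula $\int\pi^*\mu\wedge\nu=\int_X\mu\wedge\pi_*\nu$ together with $\pi_*p_\gamma\tau=\delta_{\gamma,0}$ finishes the computation. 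In effect, the Thom form supplies the compact support that the radial homotopy cannot. Your argument can be repaired along the same lines: apply the homotopy to the quotient $\omega/\tau$ rather than to the difference $\omega-\pi^*\mu\wedge\tau$, and then multiply by $\tau$.
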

\begin{proof}
Let
\begin{equation*}
  e(N_X) = f_r + f_{r-1}\varepsilon_1 + \cdots + f_0\varepsilon_r,
\end{equation*}
where $f_i \in S^i(\LieAlg{g}^*)$ and $\varepsilon_i \in \Omega^{2i}(X)$. Since $G$ is a torus, $f_r \neq 0$, hence $e(N_X)$ is invertible. By the splitting hypothesis
\begin{equation*}
  \frac{1}{e(N_X)} = \frac{1}{f_r} \left(1 + \frac{\varepsilon}{f_r} + \cdots + \frac{\varepsilon^q}{f_r^q} \right)
\end{equation*}
is an element of $\Omega_G(X; R)$, where $\varepsilon = -f_{r-1}\varepsilon_1 - \cdots - f_o\varepsilon_r$.

Let $\pi:U \rightarrow X$ be the projection coming from the identification of $U$ with $N_X$, and $i:X \rightarrow U$ corresponding to the $0$-section. In particular, $\pi^*e(N_X)$ is a Thom form for $X \subset U$, hence $\pi_*\pi^*e(N_X) = 1$.

We arrive at the identity
\begin{equation*}
  \int p_{\alpha}\omega = \int \sum_{\beta+ \gamma = \alpha}  p_{\beta}\Bigl(\frac{\omega}{\pi^* e(N_X)}\Bigr)  p_{\gamma}\bigl(\pi^*e(N_X)\bigr).
\end{equation*}
Because of the splitting hypothesis, any $\gamma = (\gamma_1, \ldots, \gamma_r)$ appearing in the right hand summation has $0\leq \gamma_i \leq 1$.
It follows from the  smoothness assumptions on $\omega$ that $p_{\beta}\Bigl(\frac{\omega}{\pi^* e(N_X)}\Bigr)$ is smooth and compactly supported for all $\beta$ appearing in the above sum. Using the fact that $U$ is contractible to $X$, we then have $p_{\beta}\Bigl(\frac{\omega}{\pi^* e(N_X)}\Bigr)$ cohomologous to $\pi^*i^*p_{\beta}\Bigl(\frac{\omega}{\pi^* e(N_X)}\Bigr)$.
The identity follows by the push-forward formula
\begin{equation*}
	\int_M \pi^*\mu \wedge \nu = \int_X \mu \wedge \pi_*\nu
\end{equation*}
and noting that
\begin{equation*}
  \pi_* p_{\gamma}\tau = p_{\gamma} \pi_* \tau = p_{\gamma} 1 = \delta_{\gamma,0}.
\end{equation*}
\end{proof}

\begin{lemma}
$p_{\alpha} \omega_{[2d]}$ is an exact form on $M \setminus M^G$.
\end{lemma}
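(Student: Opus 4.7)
The plan is to adapt the standard localization argument by explicitly constructing an equivariant primitive on $M\setminus M^G$ that takes values in the enlarged module $\Module{m}$. Using the $G$-invariant metric and orthogonality of $X_1,\ldots,X_r$, I would let $\theta_i\in\Omega^1(M)^G$ denote the metric dual to $X_i$, so that $\iota_{X_j}\theta_i=\delta_{ij}\|X_i\|^2$, and set $|X|^2=\sum_i\|X_i\|^2$, which is strictly positive on $M\setminus M^G$. On this open set define
\[
\Psi=\sum_{i=1}^{r}\frac{\theta_i}{|X|^2}\otimes\xi_i^{-1}\in\AltOmega_G(M\setminus M^G;\Module{m}).
\]
A direct computation using $G$-invariance and orthogonality yields $d_G\Psi=\nu-1$ with $\nu:=\sum_i d\bigl(\theta_i/|X|^2\bigr)\otimes\xi_i^{-1}$; Cartan's formula $\iota_{X_j}d=L_{X_j}-d\iota_{X_j}$ together with the $G$-invariance of $\theta_i/|X|^2$ then shows $d_G\nu=0$.

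Exploiting $d_G\omega=0$ and $d_G\nu=0$, I would use the telescoping identity
\[
d_G(\Psi\omega\nu^k)=(\nu-1)\omega\nu^k=\nu^{k+1}\omega-\nu^k\omega,
\]
which sums to
\[
\omega=\nu^{K+1}\omega-d_G\Bigl(\Psi\omega\sum_{k=0}^{K}\nu^k\Bigr)
\]
for every $K$. The crucial input is a pure degree count: since $\nu$ has differential form-degree $2$, the product $\nu^{d+1}\omega$ has form-degree at least $2(d+1)>2d$ in every homogeneous piece, so its $[2d]$-component vanishes. Setting $\sigma=\Psi\omega\sum_{k=0}^{d}\nu^k$ therefore forces $\omega_{[2d]}=-(d_G\sigma)_{[2d]}$; expanding the right-hand side as $d\sigma_{[2d-1]}-\sum_i\iota_{X_i}\sigma_{[2d+1]}\otimes\xi_i$ and using $\sigma_{[2d+1]}\equiv 0$ (since $\dim M=2d$) leaves $\omega_{[2d]}=-d\sigma_{[2d-1]}$ on $M\setminus M^G$. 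Applying $p_{\alpha}$ produces the desired primitive, $p_{\alpha}\omega_{[2d]}=-d\,p_{\alpha}\sigma_{[2d-1]}$.

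The main difficulty I anticipate is not conceptual but a bookkeeping one: one must verify that every manipulation stays inside $\AltOmega_G(M\setminus M^G;\Module{m})$ and that $p_{\alpha}\sigma_{[2d-1]}$ is a genuinely smooth $(2d-1)$-form. This reduces to the observations that $\Psi$ and $\nu$ are smooth wherever $|X|^2>0$, that the sums involved are finite in each equivariant grading so the relevant multiplications in $\Module{m}$ are well-defined, and that $p_{\alpha}$ commutes componentwise with the ordinary de Rham differential on $\AltOmega(M)\otimes\Module{m}$.
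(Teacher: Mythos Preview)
Your argument is correct and is essentially the same as the paper's. Both proofs build, from the metric-dual one-forms $\theta_i$ and the factors $\xi_i^{-1}$, an equivariant primitive of $1$ on $M\setminus M^G$ and then read off the top form-degree component; the paper writes this primitive compactly as $-\theta/d_G\theta$ with $\theta(Z)=\sum_i\xi_i^{-1}(X_i,Z)$ and expands $1/d_G\theta$ as a geometric series in $d\theta/D$, whereas you reach the same endpoint by telescoping powers of $\nu=d(\theta/D)$. The only cosmetic difference is which two-form you iterate, and both versions terminate for the same dimension reason.
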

\begin{proof}
Define the equivariant 1-form $\theta$ by the equation
\begin{equation*}
  \theta(Z) = \sum_{i=1}^{r} \xi_i^{-1} (X_i, Z).
\end{equation*}
By construction, $d_G \theta$ is invertible outside of $M^G$. If we let
\begin{equation*}
	\nu = -\frac{\theta}{d_G\theta} = \frac{\theta}{D}\left(1 + \frac{d\theta}{D} + \frac{(d\theta)^2}{D^2} + \cdots \right),
\end{equation*}
where $D = \sum_{i=1}^r\norm{X_i}^2$, then $d_G\nu = 1$. Moreover, $\nu$ is an element of $\Omega_G(M\setminus M^G;R)$.
Hence $p_{\alpha}d_G (\nu \wedge \omega) = p_{\alpha}\omega$, and by examining the degrees of the differential forms we have
\begin{equation*}
  d \bigl(p_{\alpha}(\nu\wedge\omega)_{[2d-1]}\bigr) = p_{\alpha}\omega_{[2d]}.
\end{equation*}
\end{proof}
Finally, we prove Theorem~\ref{thm:Localization}.
\begin{proof}
Let $\rho_X$ be a smooth, $G$-invariant function, identically $1$ on a neighbourhood of $X$, $0$ outside of a neighbourhood of $X$.
Define
\begin{equation*}
  \tilde{\omega} = \nu \wedge \omega - \sum_X \rho_X \nu\wedge\omega,
\end{equation*}
which leads to the equation
\begin{equation*}
  \omega = d_G\tilde{\omega} + \sum_X \omega_X,
\end{equation*}
where $\omega_X$ is supported on a neighbourhood of $X$, and equal to $\omega$ sufficiently close to $X$. Furthermore, one can check directly that $p_{\alpha + \beta}\omega_X$ is a smooth compactly supported form for all non-negative $\beta\in\Integers^r$.

One has
\begin{equation*}
  p_{\alpha}\omega = p_{\alpha}d_G\tilde{\omega} + \sum_X p_{\alpha}\omega_X,
\end{equation*}
while
\begin{align*}
  \int_M p_{\alpha} d_G \tilde{\omega} &= \int_M d p_{\alpha}\tilde{\omega}_{[n-1]} \\
  &= 0.
\end{align*}
\end{proof}

It is necessary to extend the above localization theorem to the following noncompact situation. 
\begin{theorem} 
Suppose $M$  is a (possibly noncompact) $G$--manifold with a proper $G$-invariant map $\mu: M \rightarrow
\Reals_{\geq 0}^r$ so that $\mu(M^G)$ is compact. We further assume that $M$ is tame, with an equivariant
embedding $M \rightarrow \overline{M}$.  If $\omega$ is a $d_G$ closed equivariant form on $M$ with
$p_{\alpha+\beta}\omega$ extending by 0 to a smooth closed form on $\overline{M}$ for all $\beta \geq 0$ then
\begin{equation*}
  \int_M p_{\alpha} \omega = \sum_X \int_X p_{\alpha} \frac{i^*\omega}{e(N_X)}.
\end{equation*}
\label{thm:OpenLocalization}
\end{theorem}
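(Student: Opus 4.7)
The plan is to recycle the proof of Theorem~\ref{thm:Localization} with one essential modification: the vanishing of the globally $d_G$-exact remainder, which in the compact setting was a trivial application of Stokes' theorem, must now be lifted to $\overline{M}$ using the extension hypothesis on $\omega$. The key observation that lets the rest of that proof carry over unchanged is that properness of $\mu$ together with compactness of $\mu(M^G)$ forces $M^G$ to be compact, since it is closed in $\mu^{-1}(\mu(M^G))$. In particular the right-hand side of the localization formula is a finite sum over compact components, and the local analysis near each $X \subset M^G$ is entirely unaffected by the noncompactness of $M$.

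Following the proof of Theorem~\ref{thm:Localization}, I choose $G$-invariant cutoffs $\rho_X$ supported in small invariant contractible neighbourhoods $U_X$ of the components $X \subset M^G$, with $\rho_X \equiv 1$ near $X$, and set
\begin{equation*}
\tilde{\omega} = \nu\wedge\omega - \sum_X \rho_X\,\nu\wedge\omega,
\qquad
\omega_X = d_G(\rho_X\,\nu\wedge\omega),
\end{equation*}
so that $\omega = d_G\tilde{\omega} + \sum_X \omega_X$. Each $\omega_X$ is $d_G$-closed, compactly supported in $U_X$, and coincides with $\omega$ on a neighbourhood of $X$, so the first lemma used in the proof of Theorem~\ref{thm:Localization} applies verbatim and produces $\int_M p_\alpha\omega_X = \int_X p_\alpha\, i^*\omega/e(N_X)$. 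Since $\tilde{\omega}$ vanishes on a neighbourhood of $M^G$, the whole theorem now reduces to establishing
\begin{equation*}
\int_M d\bigl(p_\alpha\tilde{\omega}_{[2d-1]}\bigr) = 0.
\end{equation*}

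The main obstacle is precisely this Stokes step on a noncompact manifold, and this is where I expect the technical effort to concentrate. The plan is to show that $p_\alpha\tilde{\omega}_{[2d-1]}$ extends by $0$ to a form on $\overline{M}$ smooth enough for Stokes on $\overline{M}$ to give $0$. On the support of $\tilde{\omega}$ the expansion $\nu = \theta/D + \theta\,d\theta/D^2 + \cdots$ is smooth, and because $\theta = \sum_i \xi_i^{-1}(X_i,\,\cdot\,)$ carries only $\xi_i^{-1}$ factors, each $p_\beta\nu$ vanishes unless the multi-index $\beta$ has nonpositive entries. Expanding $p_\alpha(\nu\wedge\omega) = \sum_{\beta+\gamma=\alpha}p_\beta\nu\wedge p_\gamma\omega$ therefore involves only indices $\gamma \geq \alpha$ componentwise, and for each such $\gamma$ the hypothesis guarantees that $p_\gamma\omega$ extends smoothly by $0$ across $\overline{M}\setminus M$. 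Tameness is invoked to ensure that the $G$-invariant metric and the vector fields $X_i$ used to build $\nu$ extend across $\overline{M}\setminus M$; the termwise extensions then assemble into a smooth extension by $0$ of $p_\alpha\tilde{\omega}_{[2d-1]}$, and Stokes applied on $\overline{M}$ delivers the required vanishing. The delicate point will be verifying that the potentially singular coefficients of $\nu$ are controlled by the vanishing of the $p_\gamma\omega$ factors strongly enough that the resulting form on $\overline{M}$ is genuinely regular enough for Stokes, rather than merely continuous.
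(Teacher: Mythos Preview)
Your argument is correct in outline, but it takes a different and more laborious route than the paper's. The paper does not rerun the compact proof on $M$; instead it makes a one-line reduction to Theorem~\ref{thm:Localization} by forming the double $2\overline{M}=\overline{M}\cup(-\overline{M})$, which is a closed $G$--manifold, and extending $\omega$ by zero from $M$ to all of $2\overline{M}$. The extension hypothesis on the $p_{\alpha+\beta}\omega$ is exactly what is needed for this extension to satisfy the smoothness requirements of Theorem~\ref{thm:Localization}, and since the extension vanishes off $M$, its integral over $2\overline{M}$ agrees with $\int_M p_\alpha\omega$ while the only fixed-point contributions come from $M^G\subset M$. The advantage of the doubling trick is that the delicate Stokes/regularity issue you flag is absorbed wholesale into the already-proved compact theorem rather than being reargued by hand; your approach has the compensating virtue of making explicit where the extension hypothesis is actually used, namely in controlling $\nu\wedge\omega$ near $\overline{M}\setminus M$, and your observation that $p_\beta\nu$ vanishes unless $\beta\le 0$ (so that only the hypothesized $p_{\alpha+\beta}\omega$ with $\beta\ge 0$ enter) is exactly the mechanism that makes either argument work.
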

\begin{proof}
The hypotheses allow us to construct a closed $G$--manifold by taking the double $2\overline{M} = \overline{M} \cup -\overline{M}$. The equivariant form $\omega$ extends by zero to a form on $2\overline{M}$ meeting the requirements of Theorem~\ref{thm:Localization}. Moreover, the integral of the extension over $2\overline{M}$ agrees with the integral of $\omega$ over $M$.
\end{proof}

\section{Integration on the moduli of curves using localization}
\label{sect:Integrate}

In this section we develop a method of integration for $\CompactModuli_{g,n}$ that uses McShane's identity and localization techniques. We make extensive use of Fenchel-Nielsen coordinates as well as the Teichm\"uller space of marked nodal curves $\CompactTeich_{g,n}$, also called augmented Teichm\"uller space in the literature, so we begin with a description of this space.

A marked nodal curve consists of: a surface $X$ with a complete, finite area hyperbolic metric, a set of simple closed curves $\Gamma = \{\gamma_1, \ldots, \gamma_r \} \subset S_{g,n}$, where $S_{g,n}$ is a fixed surface of genus $g$  and $n$ punctures, and a marking, which is a homeomorphism $f:S_{g,n} \backslash \Gamma \rightarrow X$.  The set of curves $\Gamma$ must be pairwise non-isotopic, with no curve homotopically trivial or boundary parallel (i.e. not collapsible to any puncture). Graphically, one can think of a nodal curve as taking a set of geodesics on a hyperbolic surface of type $(g,n)$ and degenerating the metric so that the geodesics pinch to nodes of length $0$. In their wake, cusps form on either side of the shrinking geodesics. Two marked nodal curves are equivalent if there is an isometry between them which preserves the marking. The set of equivalence classes is denoted $\CompactTeich_{g,n}$. This is a Hausdorff space which contains $\Teichmuller_{g,n}$ as an open dense subset. Refer to \cite{MR590044} for details. Moreover, the mapping class group acts on $\CompactTeich_{g,n}$, with quotient the Deligne--Mumford space $\CompactModuli_{g,n}$. 

Local charts of $\CompactTeich_{g,n}$ are constructed using Fenchel-Nielsen coordinates.   Denote $\mathcal{C} = \Reals \times (0,\infty)$ and $\overline{\mathcal{C}} = \Reals \times [0,\infty) / \sim$ where $(x,0) \sim (y,0)$. Note that the topology on $\overline{\mathcal{C}}$ is coarser than the quotient topology: a basis for open neighbourhoods about the class $(x,0)$ is provided by the sets $\Reals \times [0,\epsilon)$. If $[X,f,\Gamma]$ is an equivalence class of a marked nodal surface, complete $\Gamma$ to a pair of pants decomposition for $S_{g,n}$ by adjoining curves $A = \{\alpha_1, \ldots, \alpha_{3g-3+n-r}\}$.
An open neighbourhood of $(X,f,\Gamma)$ is the set $\overline{\mathcal{C}}^{3g-3+n}$, where we think of an element as a map $h: A \cup \Gamma \rightarrow \overline{\mathcal{}C}$, assigning a twist coordinate and length to each of the curves in the pair of pants decomposition. Specifying a length of 0 means pinching that curve to a node.

In a similar fashion, we may also construct the Teichm\"uller space of marked nodal surfaces with geodesic boundaries. Fixing the lengths of the boundaries to be $L = (L_1, \ldots, L_n)$ results in the spaces $\CompactTeich_{g,n}(L) \rightarrow \CompactModuli_{g,n}(L)$. Allowing the lengths of the boundaries to vary (including shrinking to length 0, which results in a cusp) provides the spaces $\widehat{\Teichmuller}_{g,n}\rightarrow \widehat{\Moduli}_{g,n}$. 

All of the spaces discussed above are symplectic manifolds (with the exceptions of $\widehat{\Teichmuller}_{g,n}$ and  $\widehat{\Moduli}_{g,n}$, which are fibre bundles over $\Reals_{\geq 0}^n$ with symplectic fibres). The symplectic structure is given by the Weil-Petersson form
\begin{equation*}
	\omega_{WP} = \sum_{\gamma} dl_{\gamma} \wedge d\tau_{\gamma}, 
\end{equation*}
where the sum is over any pair of pants decomposition, $l_{\gamma}$ measures the geodesic length of $\gamma$ and $\tau_{\gamma}$ is the twist coordinate, normalized to unit speed. That this is a symplectic form on $\Teichmuller_{g,n}$ is immediate. However, it was proven by Wolpert~\cite{MR796909} that the above expression is invariant under the mapping class group (hence descends to $\Moduli_{g,n}$) and extends smoothly to the boundary. One immediate consequence is that Fenchel--Nielsen twists (the vector fields $\frac{\partial}{\partial\tau_{\gamma}}$) are Hamiltonian, with moment map given by the lengths of the geodesics. Note, however, that Fenchel--Nielsen twists do not descend to a well defined action on moduli space. In particular, the concept of a simple closed geodesic is not well defined for an element of $\Moduli_{g,n}$; instead one has a mapping class group orbit of geodesics to contend with.

To recover a space with a Hamiltonian torus action, we consider the following intermediate cover, first noticed by Mirzakhani~\cite{art:MirzakhaniWP}. Let $\Gamma = \{\gamma_1, \cdots, \gamma_r \}$ be a collection of simple closed curves and $\MCG_{g,n}\cdot \Gamma$ the mapping class group orbit. We define
\begin{align*}
  \Stab \Gamma &= \{[f] \in \MCG_{g,n} \,|\, \text{$f(\gamma_i)$ is homotopic to $\gamma_i$} \} \\
  \CompactModuli_{g,n}^{\Gamma} &= \CompactTeich_{g,n} / \Stab \Gamma \\
    &= \{(X, \MathVector{\eta})\,|\, X \in \CompactModuli_{g,n}, \text{$\MathVector{\eta}$ a collection of geodesics in $\MCG_{g,n}\cdot\Gamma$} \}.
\end{align*}

The space $\CompactModuli^{\Gamma}_{g,n}$ has an $r$-torus action provided by Fenchel-Nielsen twists about the curves $\gamma_i$. We normalize the twists by geodesic length so that the exponential map with parameter 1 gives a full (Dehn) twist about the curve. Let $\{\frac{\partial}{\partial\theta_{i}}\}_{1\leq i\leq r}$ be the normalized vector fields, with corresponding moment map
\begin{align*}
  \mu: \CompactModuli^{\Gamma}_{g,n} &\rightarrow [0,\infty]^r \\
	(X, \eta_1, \ldots, \eta_r) &\mapsto (\frac{1}{2}l(\eta_1)^2, \ldots, \frac{1}{2}l(\eta_r)^2).
\end{align*}
The topology of $\CompactModuli^{\Gamma}_{g,n}$ in a neighbourhood of $\mu^{-1}(\infty, \ldots, \infty)$ is quite complicated; however, a precise understanding is not necessary. Let $\mathfrak{M}^{\Gamma} = \mu^{-1}(\Reals_{\geq 0}^r )$ be the set of nodal surfaces where no geodesic which intersects $\Gamma$ is allowed to degenerate. $\mathfrak{M}^{\Gamma}$ has the structure of an open orbifold and $\mu\bigr|_{\mathfrak{M}^{\Gamma}}$ is proper.  In fact, $\mu^{-1}(a)$ is a torus bundle over the space $\CompactModuli_{S_{g,n}\setminus \Gamma}(L)$, where $L$ is fixed by the lengths of the original boundaries and the new geodesic boundaries coming from cutting along $\Gamma$.  This proves that $\mathfrak{M}^{\Gamma}$ is tame, with an equivariant embedding $\mathfrak{M}^{\Gamma} \rightarrow \overline{\mathfrak{M}}^{\Gamma}$. The fixed point set of the torus action is $\mu^{-1}(0)$, which is the space $\CompactModuli_{S_{g,n}\setminus \Gamma}$. The reason we can restrict attention to the better--behaved space $\mathfrak{M}^{\Gamma}$ is because in the process of lifting integrals from $\CompactModuli_{g,n}$ to $\CompactModuli^{\Gamma}_{g,n}$, we end up with differential forms that vanish to infinite order as the lengths of $\Gamma$ tend to infinity.

Because $\pi:\CompactModuli^{\Gamma}_{g,n}\rightarrow \CompactModuli_{g,n}$ is an (orbifold) covering, we have the following integration formula valid for any function $f: \Reals_+^{n} \rightarrow \Reals$, and form $\omega \in \Omega^*(\CompactModuli_{g,n})$.
\begin{equation*}
  \int_{\CompactModuli_{g,n}} \sum_{\MathVector{\alpha}\in \MCG\cdot\Gamma}f(l(\alpha))\omega = \int_{\CompactModuli_{g,n}^{\Gamma}}f(l(\Gamma))\pi^*\omega.
\end{equation*}
Using McShane's identity, it is possible to write the constant function as a sum over mapping class group orbits of curves, thus enabling the use of the above formalism.

The statement of Mirzakhani's generalization of McShane's identity is as follows. Let $X$ be a hyperbolic surface with $n$ boundary components, labelled $x_1, \ldots, x_n$, of lengths $L_1, \ldots, L_n$. By abuse of notation, a cusp is a boundary component of length 0. Denote by $\mathcal{I}_j$ the set of simple closed geodesics $\gamma$ that separate a pair of pants from $X$ with $x_1$ and $x_j$ at the cuffs, $\gamma$ the waist. Similarly, we denote $\mathcal{J}$ to be the set of pairs of simple closed geodesics $(\alpha, \beta)$ with $(\alpha, \beta, x_1)$ bounding a pair of pants. 
\begin{theorem}[Mirzakhani \cite{art:MirzakhaniWP}] For $X$ as above,
\begin{equation*}
  1 =  \sum_{j=2}^n \sum_{\gamma \in \mathcal{I}_j} \mathcal{R}(L_1, L_j, l(\gamma)) + \sum_{(\alpha,\beta)\in \mathcal{J}}\frac{\FiniteCount{\Aut(\alpha,\beta)}}{2} \mathcal{D}(L_1, l(\alpha), l(\beta)),
\end{equation*}
where
\begin{multline*}
  \mathcal{R}(x,y,z) = \frac{1}{4x} \int_{0}^{x} \Bigl(h(z + t + y) + h(z - t - y) \\
+ h(z + t - y) + h(z - t + y)  \Bigr)dt 
\end{multline*}
\begin{equation*}
  \mathcal{D}(x,y,z) = \frac{1}{2x} \int_{0}^{x} \bigl( h(t + y + z) + h(-t + y + z)  \bigr)dt, 
\end{equation*}
  \begin{equation*}
    h(x) = \frac{2}{1 + e^{x/2}}
  \end{equation*}
and $\Aut(\alpha,\beta)$ is trivial unless $\alpha$ and $\beta$ coincide (which implies that $X$ is a once--punctured torus).
\end{theorem}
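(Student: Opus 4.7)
The plan is to establish the identity pointwise on the boundary $x_1$, by partitioning (up to a measure zero set) the circle $x_1$ according to the behavior of the orthogonal simple geodesic ray emanating from each point, then summing the lengths of the resulting intervals. This is the strategy initiated by McShane for the once-punctured torus and adapted by Mirzakhani to surfaces with boundary.

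First I would set up the measure-theoretic framework. For each $p \in x_1$, let $\gamma_p$ be the geodesic ray starting at $p$ orthogonal to $x_1$. By a standard result of Birman--Series, the set of $p$ such that $\gamma_p$ is a complete simple geodesic has Hausdorff dimension zero, hence measure zero. For every $p$ outside this set, $\gamma_p$ either self-intersects at some first time, or meets another boundary component while remaining simple. In the first case, the initial simple segment, together with the two loops cut out at the first self-intersection point, determines a unique embedded pair of pants $P(p)$ having $x_1$ as one cuff and two simple closed geodesics $\alpha(p), \beta(p)$ as the other cuffs. In the second case, $\gamma_p$ exits through some boundary $x_j$, $j \neq 2,\ldots,n$, and the analogous construction assigns to $p$ a pair of pants with cuffs $x_1, x_j$, and a single simple closed geodesic $\gamma(p) \in \mathcal{I}_j$.

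Next I would carry out the explicit hyperbolic computation of the measures. Fix $\gamma \in \mathcal{I}_j$ with boundary $(x_1, x_j, \gamma)$. Using the trigonometry of right-angled hexagons in the pair of pants with these cuff lengths, one computes the total length on $x_1$ of the set of $p$ for which $(x_1, x_j, \gamma)$ is the associated pants. The computation naturally expresses this length as $L_1 \cdot \mathcal{R}(L_1, L_j, l(\gamma))$, and the integrand $h(x) = 2/(1+e^{x/2})$ arises precisely as the derivative of the horocyclic gap function on the boundary of a hyperbolic pair of pants. An analogous trigonometric computation for the $(\alpha, \beta)$ case produces the measure $L_1 \cdot \mathcal{D}(L_1, l(\alpha), l(\beta))/2$ per pair, with the factor $|\Aut(\alpha,\beta)|/2$ accounting for whether the ordering of $(\alpha,\beta)$ can be exchanged (the exceptional case $\alpha = \beta$ giving the once-punctured torus with $|\Aut|=2$).

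Finally I would sum the contributions. Since the partition exhausts $x_1$ up to measure zero, the sum of all these lengths equals $L_1$; dividing by $L_1$ yields the stated identity. The main obstacle in this program is the combinatorial-geometric step: proving that the assignment $p \mapsto P(p)$ is well-defined, injective on appropriate preimages, and that every element of $\mathcal{I}_j \cup \mathcal{J}$ is actually realized, i.e.\ that the natural map from pants decompositions back to intervals on $x_1$ is a bijection off a null set. This requires carefully analyzing how neighboring rays $\gamma_p$ evolve and showing that the pants structure changes only along the measure zero boundary consisting of rays that are themselves simple complete geodesics. The explicit formulas for $\mathcal{R}$ and $\mathcal{D}$ are then a direct, if somewhat lengthy, exercise in hyperbolic trigonometry in the pair of pants.
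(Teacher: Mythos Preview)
The paper does not prove this theorem; it is stated with attribution to Mirzakhani and cited without argument. So there is no proof in the paper to compare against. Your outline is essentially the strategy of Mirzakhani's original proof: use the Birman--Series result to discard a null set on $x_1$, associate to each remaining orthogeodesic ray an embedded pair of pants containing $x_1$, and compute via hyperbolic trigonometry the arclength on $x_1$ of the preimage of each pants. One small slip: in your ``second case'' you wrote $j \neq 2,\ldots,n$ where you clearly mean $j \in \{2,\ldots,n\}$ (or that $\gamma_p$ hits $x_j$ with $j \neq 1$, possibly $j=1$ as well, which feeds the $\mathcal{D}$ terms rather than the $\mathcal{R}$ terms). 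Beyond that, your sketch is accurate and the genuine work lies exactly where you flag it, in the well-definedness and exhaustiveness of the map $p \mapsto P(p)$ and in the hexagon computations producing $\mathcal{R}$ and $\mathcal{D}$.
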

\noindent Note that the definitions of $\mathcal{R}(x,y,z)$ and $\mathcal{D}(x,y,z)$ differ slightly from those introduced  by Mirzakhani.

We further refine our subsets of curves by setting $\mathcal{J}_{g_1, \mathcal{A}} \subset \mathcal{J}$ to be the set of pairs of curves $(\alpha, \beta)$ so that $X \setminus \{\alpha, \beta \}$ splits into 3 components: a pair of pants, a surface with genus $g_1$ and boundaries $\{x_i\}_{i\in\mathcal{A}} \cup \alpha$, and a component with genus $g_2 = g - g_1$ and boundaries $\{x_i\}_{i\in \mathcal{A}^c}\cup\beta$. As well, let $\mathcal{J}_{\text{conn}}\subset \mathcal{J}$ be the set of curves with $X \setminus \{\alpha, \beta \}$ splitting into 2 components.
All of the sets of curves $\mathcal{I}_j, \mathcal{J}_{\text{conn}}, \mathcal{J}_{g_1, \mathcal{A}}$ are mutually disjoint. Moreover, the action of the mapping class group is transitive on each of the sets.

Hence, let $\omega$ be any differential form on $\CompactModuli_{g,n}(L)$. We have the following integration formula:
\begin{multline*}
  \int_{\CompactModuli_{g,n}(L)}\omega = \sum_{j=2}^n \int_{\CompactModuli_{g,n}^{\gamma}(L)} \mathcal{R}(L_1, L_j, l(\gamma))\pi^*\omega \\
  + \frac{1}{2}\sum_{\substack{g_1 + g_2 = g \\ \mathcal{A}\coprod\mathcal{B} = \{2, \ldots, n \}}} \int_{\CompactModuli_{g,n}^{\alpha,\beta}(L)}\mathcal{D}(L_1, l(\alpha), l(\beta))\pi^*\omega \\
  + \frac{1}{2}\int_{\CompactModuli_{g,n}^{\mu,\nu}(L)} \mathcal{D}(L_1, l(\mu), l(\nu))\pi^*\omega,
\end{multline*}
where $\gamma$ is a curve in $\mathcal{I}_j$, $(\alpha, \beta)\in \mathcal{J}_{g_1, \mathcal{A}}$, $(\mu,\nu)\in \mathcal{J}_{\text{conn}}$, and $\pi:\CompactModuli^{\Gamma}_{g,n}\rightarrow \CompactModuli_{g,n}$ is the projection map.

In order to make use of the Atiyah--Bott Localization Theorem, we need equivariant forms on $\CompactModuli_{g,n}^{\Gamma}$ whose integrals correspond with the forms appearing in the above sum. Localization would reduce the  integrals to the fixed point set of the torus action, which is $\CompactModuli_{S_{g,n}\setminus \Gamma}$. In the abstract, this is possible by equivariant formality: since the torus action is Hamiltonian there is an isomorphism
\begin{equation*}
  H^*_{G}(\CompactModuli_{g,n}^{\Gamma}) \cong H^*(\CompactModuli_{S_{g,n}\setminus \Gamma}) \otimes S(\LieAlg{g}^*).
\end{equation*}
The above is sufficient to prove that any integral on $\CompactModuli_{g,n}$ can be pushed to a specific subset of the boundary, which can be described using a completely combinatorial language. For more concrete integration formulas, such as the Witten-Kontsevich Theorem, one needs actual equivariant representatives to work with. We take up this topic in the next section. 

To close this section, we compute the equivariant Euler class of the fixed point set  $\CompactModuli_{S_{g,n}\setminus \Gamma}\subset \CompactModuli^{\Gamma}_{g,n}$. Recall that $\Gamma = \{\gamma_1, \ldots, \gamma_r\}$ ($r=1$ or $2$) and we are in the situation where $S_{g,n}\setminus \Gamma$ splits off a pair of pants which contains the first marked point. In particular, the newly formed 3--punctured spheres have no moduli. The remaining components of $S_{g,n}\setminus \Gamma$ have $r$ new punctures added. We denote the $\psi$ classes of these punctures by $\psi_1, \ldots, \psi_r$.
\begin{proposition}
The equivariant Euler class of the fixed point set is 
\begin{equation*}
  e = \prod(-\psi_i + \xi_i).
\end{equation*}
\end{proposition}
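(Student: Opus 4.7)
The plan is to decompose the normal bundle of the fixed locus as a direct sum of $r$ complex line bundles, compute the ordinary first Chern class of each using the geometry of the boundary stratification of $\CompactModuli_{g,n}$, and determine the equivariant twist of each summand from the moment map. Near a fixed point, each pinched curve $\gamma_i$ contributes one independent smoothing parameter, and the twist $\partial/\partial \theta_i$ acts nontrivially only on the smoothing direction for $\gamma_i$, since Fenchel-Nielsen twists along disjoint curves commute and preserve the complement. Hence the normal bundle splits $G$-equivariantly as $N = \bigoplus_{i=1}^r N_i$, with the $i$-th circle factor acting trivially on $N_j$ for $j \ne i$.

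For the ordinary Chern class of $N_i$, I would appeal to the standard formula for the normal bundle to a boundary divisor of stable curves: $c_1(N_i) = -\psi'_i - \psi''_i$, where $\psi'_i$ and $\psi''_i$ are the cotangent line classes at the two branches of the $i$-th node. The proposition's hypothesis is that $\Gamma$ splits off a pair of pants containing the first marked point, so one branch of each node sits on a 3-punctured sphere. Since the moduli space of a 3-punctured sphere is a point, the corresponding $\psi$ class is pulled back from a point and vanishes. This leaves $c_1(N_i) = -\psi_i$, where $\psi_i$ is the cotangent line at the new puncture on the non-trivial component.

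For the equivariant weight, I use the local Hamiltonian normal form near the fixed point. Since the $i$-th circle is Hamiltonian with moment map $\mu_i = \tfrac{1}{2} l(\gamma_i)^2$, equivariant Darboux lets us pick a complex coordinate $z_i$ on the fiber of $N_i$ in which the symplectic form is standard and the moment map takes the form $\tfrac{1}{2}|z_i|^2$; the circle then acts as $z_i \mapsto e^{i\phi} z_i$, i.e.\ with weight $+1$. Consequently the equivariant first Chern class of $N_i$ in the Cartan model is $c_1(N_i) + \xi_i = -\psi_i + \xi_i$, and taking the product over $i$ yields the stated formula.

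The main subtlety lies in pinning down the weight normalization correctly. The natural plumbing parameter on the normal bundle is not linearly related to the Fenchel-Nielsen length $l(\gamma_i)$, so one cannot directly identify $|z_i|$ with a plumbing modulus. However, only the linear part of the action at the fixed point matters for the equivariant Chern class, and this is completely determined by the symplectic form together with the quadratic expansion of $\mu_i$; the coefficient $\tfrac{1}{2}$ in $\mu_i = \tfrac{1}{2}l(\gamma_i)^2$ (which was recorded in the discussion preceding the proposition, and reflects the rescaling from the unit-speed twist $\partial/\partial \tau_i$ with moment map $l(\gamma_i)$) is precisely what forces the weight to equal $+1$ and produces the coefficient $\xi_i$ rather than some other positive multiple.
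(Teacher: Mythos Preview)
Your argument is correct and follows the same skeleton as the paper's proof: split the normal bundle equivariantly into the $r$ line bundles attached to the new punctures, identify each ordinary first Chern class with $-\psi_i$, and read off the torus weight on each factor. The paper's proof is essentially a one-line assertion of these three facts (``the normal bundle is naturally identified with the direct sum of line bundles corresponding to new punctures'' and the action ``is by rotation, opposite to the natural orientation''), whereas you supply the supporting details: the standard self-intersection formula $c_1(N_i)=-\psi'_i-\psi''_i$ for boundary divisors together with the triviality of $\CompactModuli_{0,3}$ to kill one $\psi$, and an equivariant Darboux argument using the quadratic moment map $\mu_i=\tfrac12 l(\gamma_i)^2$ to pin the weight at $+1$. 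The Darboux argument is a cleaner justification of the sign than the paper's bare orientation claim, and your final paragraph correctly isolates the one genuine subtlety (the normalization of the twist), so there is nothing missing.
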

\begin{proof}
This statement is an immediate consequence of the fact that the torus acts trivially on the base space and the action on the normal bundle (which is naturally identified with the direct sum of line bundles corresponding to new punctures) is by rotation, opposite to the natural orientation.
\end{proof}

\section{Constructing equivariant forms}
\label{sect:Equiv}
In order to construct equivariant versions of the tautological classes, it is necessary to make use of the symplectic geometry of $\CompactModuli_{g,n}(L)$. 
Suppose $f:\Reals_+ \rightarrow \Reals_+$ is any smooth function so that $f(l(\gamma)){\omega_{WP}^d}$ is an integrable form on $\CompactModuli_{g,n}^{\gamma}(L)$. If we define functions $f_{k}(x)$ by the recursive formula
\begin{align*}
  f_0(x) &= f(x) \\
  \frac{d}{dx} f_k(x) &= xf_{k-1}(x), 
\end{align*}
then the equivariant form
\begin{equation}
  \sum_{k=0}^{d} s^k f_k(l(\gamma)) \frac{\omega_{WP}^{d-k}}{(d-k)!}
  \label{eqn:EquivSingle}
\end{equation}
is an equivariantly closed form in $\Omega_G\bigl(\CompactModuli^{\gamma}_{g,n}(L)\bigr)$. Here one uses the fact that 
\begin{equation*}
\iota_{\frac{\partial}{\partial\theta_{\gamma}}}\omega_{WP} = l_{\gamma} dl_{\gamma},
\end{equation*}
where $\frac{\partial}{\partial\theta_{\gamma}}$ is the vector field generated by a (length normalized) Fenchel--Nielsen twist about $\gamma$.

Similarly, if we define functions $g_{i,j}(x,y)$ by the rules
\begin{align*}
  g_{0,0}(x,y) &= g(x,y) \\
  \frac{\partial}{\partial x}g_{i,j}(x,y) &= xg_{i-1,j}(x,y) \\
  \frac{\partial}{\partial y}g_{i,j}(x,y) &= yg_{i,j-1}(x,y),
\end{align*}
then
\begin{equation}
  \sum_{k=0}^d \sum_{p \in \Integers}s^pt^{-p+k} g_{p, -p+k}(l(\alpha),l(\beta))\frac{\omega_{WP}^{d-k}}{(d-k)!}
  \label{eqn:EquivDouble}
\end{equation}
is an equivariantly closed form in $\AltOmega_G(\CompactModuli^{\alpha,\beta}_{g,n}(L); \Module{m})$.  Observe that $g_{p,q}$ is not smooth if $p$ or $q$ is negative. In particular, there is a singularity at $l(\alpha)=0$ when $p<0$, while $g_{p,q}(l(\alpha),0)$ is singular when $q<0$.

The strategy is as follows for constructing equivariant versions of mixed $\psi$ and $\kappa_1$ classes. Consider the commutative (up to homotopy) square
\begin{equation*}
  \begin{CD}
   \CompactModuli^{\Gamma}_{g,n}   @>\Phi_L>>	\CompactModuli^{\Gamma}_{g,n}(L) \\
	@VVV		@VVV \\
   \CompactModuli_{g,n}		@>\phi_L>>	\CompactModuli_{g,n}(L),
  \end{CD}
\end{equation*}
where $\phi_L$ is a family of diffeomorphisms and $\Phi_L$ is a family of equivariant diffeomorphisms. Existence of these families of maps follows from the triviality of the bundles $\widehat{\Moduli}_{g,n}\rightarrow \Reals^n_{\geq 0}$ and $\widehat{\Moduli}^{\Gamma}_{g,n}\rightarrow\Reals^n_{\geq 0}$. By work of Mirzakhani~\cite{art:MirzakhaniIT}, the pullback forms $\phi^*_L \omega_{WP}$ are cohomologous to $\omega_{WP} + \frac{1}{2}\sum L_i^2 \psi_i$. Hence 
\begin{equation}
  2^{k_1 + \cdots k_n} \frac{k_0!\cdots k_n!}{(2k_1)!\cdots(2k_n)!} \partial_{L_1}^{2k_1} \cdots \partial_{L_n}^{2k_n} \frac{\phi^*_L \omega_{WP}^d}{d!} \Bigr|_{L=0} = \omega_{WP}^{k_0}\psi_1^{k_1}\cdots\psi_n^{k_n}.
  \label{eqn:TautDeriv}
\end{equation}

Using McShane's identity on $\CompactModuli_{g,n}(L)$ we lift $\frac{1}{d!}\omega_{WP}^d$ to differential forms \\ $\mathcal{R}(L_1, L_j, l(\gamma))\frac{\omega_{WP}^d}{d!}$ on $\CompactModuli^{\gamma}_{g,n}(L)$ and $\mathcal{D}(L_1, l(\alpha), l(\beta))\frac{\omega_{WP}^d}{d!}$ on $\CompactModuli^{\alpha,\beta}_{g,n}(L)$. These forms have equivariant extensions, using \eqref{eqn:EquivSingle} and \eqref{eqn:EquivDouble} respectively. Finally, pulling back by the equivariant map $\Phi_{L}$ and taking derivatives as in \eqref{eqn:TautDeriv} results in equivariantly closed forms on $\CompactModuli^{\gamma}_{g,n}$ and $\CompactModuli^{\alpha,\beta}_{g,n}$. It is these forms to which we apply the localization theorem.

The following calculations allow us to deduce Mirzakhani's recursion relation~\cite{art:MirzakhaniWP} from the localization techniques of this paper.
Let $\mathcal{P}_x$ be the integral operator $\mathcal{P}f(x) = \int_{x}^{\infty}tf(t)dt$. Hence $\mathcal{P}_x^n f(x) = f_k(x)$, while $\mathcal{P}_x^n\mathcal{P}_y^m g(x,y)=g_{n,m}(x,y)$.
\begin{proposition}
  $\mathcal{P}^n e^{-\alpha x} = \sum_{j=0}^{n} A_j^{(n)} \frac{x^j}{\alpha^{2n-j}} e^{-\alpha x}$, where
  \begin{equation*}
    A_k^{(n)} = \frac{(2n-k)!}{2^{n-k}k!(n-k)!}
  \end{equation*}
  \label{prop:PRelation}
\end{proposition}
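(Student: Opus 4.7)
The plan is to proceed by induction on $n$, characterising $\mathcal{P}$ by a first-order ODE rather than carrying out iterated integrations directly. The base case $n=0$ is immediate since $A_0^{(0)}=1$. Write $P_n(x,\alpha) = \sum_{j=0}^n A_j^{(n)} x^j \alpha^{j-2n}$, so that the claim becomes $\mathcal{P}^n e^{-\alpha x} = P_n(x,\alpha)\,e^{-\alpha x}$.

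The key observation is a unique characterisation of $\mathcal{P}$: for any smooth $g\colon[0,\infty)\to\Reals$ that decays faster than any polynomial, $\mathcal{P}g$ is the unique function $h$ satisfying $h'(x) = -x g(x)$ with $h(x)\to 0$ as $x\to\infty$. This is just the fundamental theorem of calculus applied to $\int_x^\infty t g(t)\,dt$. Since $P_n(x,\alpha)\,e^{-\alpha x}$ plainly decays at infinity, by the inductive hypothesis it suffices to verify
\begin{equation*}
\frac{d}{dx}\bigl[P_n(x,\alpha)\,e^{-\alpha x}\bigr] = -x\,P_{n-1}(x,\alpha)\,e^{-\alpha x},
\end{equation*}
which after cancelling $e^{-\alpha x}$ reduces to the polynomial identity
\begin{equation*}
P_n'(x,\alpha) - \alpha\,P_n(x,\alpha) = -x\,P_{n-1}(x,\alpha).
\end{equation*}

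Matching coefficients of $x^j\alpha^{j+1-2n}$ on both sides (with the convention that $A_k^{(m)}=0$ if $k<0$ or $k>m$) reduces the whole proposition to the recurrence
\begin{equation*}
(j+1)\,A_{j+1}^{(n)} - A_j^{(n)} = -A_{j-1}^{(n-1)}.
\end{equation*}
Plugging in the closed form $A_k^{(n)} = \tfrac{(2n-k)!}{2^{n-k}k!(n-k)!}$, dividing through by $A_j^{(n)}$, and cancelling factorials collapses the interior cases ($1\le j\le n-1$) to the elementary identity $2(n-j)-(2n-j) = -j$; the boundary cases $j=0$ and $j=n$ come down to $A_1^{(n)}=A_0^{(n)}$ and $A_n^{(n)}=A_{n-1}^{(n-1)}=1$ respectively, each a quick check. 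The step I would most expect to be messy is this coefficient matching, but it is ultimately just arithmetic. The alternative direct approach — iterating the explicit formula $\int_x^\infty t^k e^{-\alpha t}\,dt = \sum_{i=0}^k \tfrac{k!}{i!\,\alpha^{k+1-i}}\,x^i\,e^{-\alpha x}$ and then collapsing a nested double sum by a Vandermonde-type identity — is avoided entirely by the ODE reformulation.
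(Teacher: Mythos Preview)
Your proof is correct and follows essentially the same route as the paper: both hinge on the relation $\frac{d}{dx}\mathcal{P}^n f = -x\,\mathcal{P}^{n-1}f$ (the paper writes it as $-\frac{1}{x}\frac{d}{dx}\mathcal{P}^n f = \mathcal{P}^{n-1}f$), and both reduce to the same three-term recurrence for the $A_k^{(n)}$, your $(j+1)A_{j+1}^{(n)} - A_j^{(n)} = -A_{j-1}^{(n-1)}$ being a reindexing of the paper's $kA_k^{(n)} = A_{k-1}^{(n)} - A_{k-2}^{(n-1)}$. The only cosmetic difference is that the paper phrases the induction as being on $k+n$ while you induct on $n$; your explicit handling of the decay condition for uniqueness of the antiderivative is a nice touch that the paper leaves implicit.
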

\begin{proof}
  The proof is by induction on $k+n$ using the identity
  \begin{equation*}
    A_{k}^{(n)} = \frac{A_{k-1}^{(n)} - A_{k-2}^{(n-1)}}{k}.
  \end{equation*}
  This identity stems from the relation
  \begin{equation*}
    -\frac{1}{x}\frac{d}{dx} \mathcal{P}^n f(x) = \mathcal{P}^{n-1}f(x).
  \end{equation*}
\end{proof}

\begin{corollary}
  If $h(x) = \frac{2}{1 + e^{x/2}}$ then 
  \begin{multline*}
    \mathcal{P}_x^n\mathcal{P}_y^m h^{(2k)} (x+ y)\Bigr|_{x,y=0}  \\
      = \begin{cases} 
	(2n-1)!!(2m-1)!!(2^{2(n+m-k)+1}-4 )\zeta(2(n+m-k)) & k < n+m \\
	(2n-1)!!(2m-1)!!	& k = n+m \\
  	0	& k > n+m.
                                            \end{cases}
  \end{multline*}
\end{corollary}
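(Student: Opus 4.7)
The plan is to expand $h$ as an exponential series and combine Proposition~\ref{prop:PRelation} with the analytic properties of the Dirichlet eta function. For $u>0$ the absolutely convergent expansion
\[
  h(u) = 2\sum_{j=1}^{\infty}(-1)^{j-1}e^{-ju/2}
\]
differentiates termwise to $h^{(2k)}(u) = 2\sum_{j\geq 1}(-1)^{j-1}(j/2)^{2k}e^{-ju/2}$, uniformly convergent on $[\epsilon,\infty)$. Combining this with Proposition~\ref{prop:PRelation}, which yields $\mathcal{P}^n e^{-\alpha x}\bigr|_{x=0} = (2n-1)!!/\alpha^{2n}$, a term-by-term application of $\mathcal{P}_x^n\mathcal{P}_y^m$ followed by evaluation at $x=y=0$ gives the expression
\[
  \mathcal{P}_x^n\mathcal{P}_y^m h^{(2k)}(x+y)\Bigr|_{x,y=0} = 2(2n-1)!!(2m-1)!!\sum_{j=1}^{\infty}(-1)^{j-1}(j/2)^{2(k-n-m)},
\]
which is rigorous in the cases where the sum on the right converges.

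When $k<n+m$ the sum converges absolutely. Setting $s=2(n+m-k)$, it equals $2^{s+1}\eta(s)$, where $\eta(s) = (1-2^{1-s})\zeta(s)$ is the Dirichlet eta function, and the algebraic simplification $2^{s+1}(1-2^{1-s}) = 2^{s+1}-4$ produces the first branch of the formula. Justifying the interchange of sum and iterated integration is routine using uniform convergence on $[\epsilon,\infty)$ followed by continuity as $x,y\to 0$.

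The main obstacle is the two boundary cases $k\geq n+m$, where the termwise sum diverges and a different strategy is needed. I will prove, by induction on $n$ starting from the integration-by-parts identity $\mathcal{P}F''(x)=F(x)-xF'(x)$, the auxiliary lemma
\[
  \mathcal{P}^n F^{(2n)}(x)\Bigr|_{x=0} = (2n-1)!!\,F(0)
\]
for every smooth $F$ with sufficient decay at infinity. Applying it first in $x$ with $F(x)=h^{(2(k-n))}(x+y)$ (valid since $k\geq n$) and then in $y$ with $F(y)=h^{(2(k-n-m))}(y)$ reduces the problem to evaluating $(2n-1)!!(2m-1)!!\,h^{(2(k-n-m))}(0)$. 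To finish I will use the closed form $h(x)=1-\tanh(x/4)$: since $\tanh$ is odd, $h(0)=1$ while $h^{(2\ell)}(0)=0$ for every $\ell\geq 1$. This delivers $(2n-1)!!(2m-1)!!$ when $k=n+m$ and $0$ when $k>n+m$, completing the case analysis.
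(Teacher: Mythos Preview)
Your proof is correct and follows essentially the same line as the paper's. For $k<n+m$ both arguments expand $h$ in exponentials and apply Proposition~\ref{prop:PRelation}; your use of the Dirichlet eta function just makes the zeta-value computation explicit. For the boundary cases $k\geq n+m$, the paper invokes the commutator identity $[\mathcal{P},\partial^2]=2$ together with a formula for $\partial^{2k}\mathcal{P}^n h(0)$, whereas you prove the equivalent statement $\mathcal{P}^n F^{(2n)}(0)=(2n-1)!!\,F(0)$ directly by iterated integration by parts; your formulation is arguably more transparent, but both encode the same algebra. The final observation that $h(x)=1-\tanh(x/4)$ is odd plus a constant is identical in both proofs.
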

 \begin{proof}
The case $k < n+m$ follows immediately from Proposition~\ref{prop:PRelation}. The other cases are derived from the relations $[\mathcal{P}, \partial^2] = 2$ and
\begin{equation*}
   \partial^{2k}\mathcal{P}^n h(0) = \begin{cases}
 	(-1)^k(2k-1)!! \mathcal{P}^{n-k} h(0)  & k \leq n \\
 	0 & k > n.
                                     \end{cases}
 \end{equation*}
Finally, note that $k>n+m$ gives 0 because $h(x)$ is an odd function plus a constant.
 \end{proof}

Combining the above calculations and using the Localization Theorem~\ref{thm:OpenLocalization} we can calculate intersection numbers
\begin{equation*}
	\intersect{\kappa_1^{k_0}\tau_{k_1}\cdots\tau_{k_n}}_g \stackrel{\text{def}}{=} 
	\frac{1}{(2\pi^2)^{k_0}}\int_{\CompactModuli_{g,n}}\omega_{WP}^{k_0}\psi_1^{k_1}\cdots \psi_{n}^{k_n}.
\end{equation*}
by the rule given below.
\begin{proposition}
\begin{multline*}
  (2k_1+1)!!\intersect{\kappa_1^{k_0}\tau_{k_1}\cdots\tau_{k_n}}_g \\
  =  \sum_{j=2}^{n} \sum_{l=0}^{k_0}\frac{k_0!}{(k_0-l)!} \frac{(2(l+k_1+k_j)-1)!!}{(2k_j-1)!!} \beta_l \intersect{\kappa_1^{k_0-l}\tau_{k_1+k_j+l-1}\prod_{i \neq 1,j}\tau_{k_i}}_g \\
 + \frac{1}{2} \sum_{l=0}^{k} \sum_{d_1 + d_2 = l+k_1 - 2}  \frac{k_0!}{(k_0-l)!}
  (2d_1+1)!! (2d_2+1)!! \beta_l \intersect{\kappa_1^{k_0-l}\tau_{d_1}\tau_{d_2}\prod_{i\neq 1}\tau_{k_i}}_{g-1}  \\
  + \frac{1}{2}\sum_{\substack{g_1+g_2 = g \\ \mathcal{I} \coprod \mathcal{J} = \{2, \ldots, n \}}}
  \sum_{l=0}^{k_0}\sum_{d_1 + d_2 = l+k_1-2}  \frac{k_0!}{m_0! n_0!}
 (2d_1+1)!!(2d_2+1)!! \beta_l \\
  \times \intersect{\kappa_1^{m_0}\tau_{d_1}\tau_{k(\mathcal{I})}}_{g_1}
 \intersect{\kappa_1^{n_0}\tau_{d_2}\tau_{k(\mathcal{J})}}_{g_2},
\end{multline*}
where
\begin{equation*}
\beta_l = (2^{2l+1}-4)\frac{\zeta(2l)}{(2\pi^2)^l} = (-1)^{l-1}2^l(2^{2l}-2) \frac{B_{2l}}{(2l)!},
\end{equation*}
and $m_0$, $n_0$ in the third summand are the unique integers with $m_0 + n_0 = k_0-l$ and each intersection number having the correct dimension.
\end{proposition}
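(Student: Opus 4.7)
The plan is to assemble the ingredients developed above: the McShane-type lifting identity displayed just before Mirzakhani's theorem, the equivariant extensions \eqref{eqn:EquivSingle} and \eqref{eqn:EquivDouble}, and the open localization Theorem~\ref{thm:OpenLocalization}, and to carry the resulting identity back to intersection numbers using \eqref{eqn:TautDeriv}. Starting from \eqref{eqn:TautDeriv} one writes $\int_{\CompactModuli_{g,n}}\omega_{WP}^{k_0}\psi_1^{k_1}\cdots\psi_n^{k_n}$ as a combinatorial multiple of $\partial_{L_1}^{2k_1}\cdots\partial_{L_n}^{2k_n}\int_{\CompactModuli_{g,n}(L)}\phi_L^*\omega_{WP}^d/d!$ evaluated at $L=0$, where $d=k_0+\sum k_i$. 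Using McShane's identity, this integral splits into three families of lifted integrals against $\mathcal{R}(L_1,L_j,l(\gamma))$ and $\mathcal{D}(L_1,l(\alpha),l(\beta))$ on the covers $\CompactModuli^\gamma_{g,n}(L)$ and $\CompactModuli^{\alpha,\beta}_{g,n}(L)$, corresponding to the curve classes $\mathcal{I}_j$, $\mathcal{J}_{\text{conn}}$ and $\mathcal{J}_{g_1,\mathcal{A}}$.

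Pulling back via $\Phi_L$ to the $L$-independent covers $\CompactModuli^\Gamma_{g,n}$, I promote each integrand to an equivariantly closed form using \eqref{eqn:EquivSingle} or \eqref{eqn:EquivDouble} with $f$, resp.\ $g$, set to $\mathcal{R}$, resp.\ $\mathcal{D}$. Theorem~\ref{thm:OpenLocalization} then reduces each integral to one over the fixed-point set $\CompactModuli_{S_{g,n}\setminus \Gamma}$ of the surface obtained by pinching off the pair of pants containing $x_1$, with equivariant Euler class $\prod_i(-\psi_i+\xi_i)$ by the preceding Proposition. The hypothesis that $p_{\alpha+\beta}\omega$ extend by zero to a smooth form on the natural compactification follows from the exponential decay of $h$, and hence of $\mathcal{R}$ and $\mathcal{D}$, as the pinched geodesic lengths tend to infinity, producing vanishing to infinite order at that stratum of the boundary.

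After localization, the residual integral is assembled by combining three expansions: the Cartan expansion $\sum_k s^k f_k(l(\gamma))\,\omega_{WP}^{d-k}/(d-k)!$ of the equivariant lift, the geometric expansion of $1/\prod(\xi_i-\psi_i)$ in powers of $\psi_i$, and the Taylor expansion in $L$ produced by the $\partial_{L_i}^{2k_i}$ derivatives, which hit both the $\mathcal{R}$ or $\mathcal{D}$ prefactor and the factor $\phi_L^*\omega_{WP}^d$. The projection $p_\alpha$ selects the unique monomial in the $\xi_i$ of total degree matching the codimension of the stratum. Every scalar coefficient that survives is then an expression of the form $\mathcal{P}_x^a\mathcal{P}_y^b\partial^{2c}h(x+y)|_{x=y=0}$ in the $\mathcal{D}$ case, or its one-variable analogue in the $\mathcal{R}$ case, and these are precisely the quantities evaluated by the Corollary to Proposition~\ref{prop:PRelation}. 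The double factorials $(2d_i+1)!!$ and $(2a-1)!!(2b-1)!!$ in the statement are outputs of that Corollary, while the $\beta_l$ factors arise from its $(2^{2l+1}-4)\zeta(2l)$ term, rescaled by $(2\pi^2)^{-l}$ to match the normalization $\kappa_1=\omega_{WP}/(2\pi^2)$. The three families $\mathcal{I}_j$, $\mathcal{J}_{\text{conn}}$, $\mathcal{J}_{g_1,\mathcal{A}}$ produce the three summands, with the $1/2$ and the automorphism factor absorbed as in the integration formula following McShane's theorem.

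The main obstacle is the combinatorial bookkeeping. One must synchronize the three expansions above so that the overall $(2k_1+1)!!$ on the left appears correctly on the right, drawing on both the constant $C$ from \eqref{eqn:TautDeriv} and the prefactor $\tfrac{1}{4L_1}\int_0^{L_1}$ in $\mathcal{R}$. The index $l$ on the right corresponds to the number of powers of $\omega_{WP}$ consumed by the $f_k$ terms in the Cartan expansion (equivalently, the number of $\xi$-factors traded against $1/(\xi_i-\psi_i)$ during localization); the multinomial $k_0!/(k_0-l)!$ is the count of how these can be selected from the $k_0$ copies of $\omega_{WP}$; and the nontrivial combinatorial check is that the sum over all distributions of the $\partial_{L_i}$ derivatives, with coefficients $A^{(n)}_k$ from Proposition~\ref{prop:PRelation}, collapses to the single double factorial appearing in each term. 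Once that bookkeeping is pinned down, no new analytic input beyond the already-established lemmas is required.
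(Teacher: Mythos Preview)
Your proposal is correct and follows exactly the approach the paper intends: the paper does not supply a separate proof environment for this proposition but simply states that it follows by ``combining the above calculations and using the Localization Theorem~\ref{thm:OpenLocalization}'', and your write-up is precisely a fleshing-out of that sentence using the McShane lift, the equivariant extensions \eqref{eqn:EquivSingle}--\eqref{eqn:EquivDouble}, the Euler class $\prod(-\psi_i+\xi_i)$, and the Corollary to Proposition~\ref{prop:PRelation}. Your identification of the combinatorial bookkeeping as the only remaining work, and of the origin of each factor ($\beta_l$, the double factorials, the multinomial $k_0!/(k_0-l)!$), matches the paper's implicit argument.
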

As proven in \cite{math.QA/0601194}, this formula is an equivalent differential version of Mirzakhani's recursion relation and is a Virasoro constraint condition on the generating function
\begin{equation*}
  G(s, t_0, t_1, \ldots) = \sum_g \intersect{e^{s\kappa_1 + t_0 \tau_0 + \cdots}}_g
\end{equation*}
of mixed $\psi$ and $\kappa_1$ intersection numbers.

\bibliographystyle{hamsplain}
\bibliography{References}

\end{document}